 \newtheorem{thm}{Theorem}[section]
 \newtheorem{cor}[thm]{Corollary}
 \newtheorem{prop}[thm]{Proposition}
 \theoremstyle{definition}
 \newtheorem{rem}[thm]{Remark}
 \numberwithin{equation}{section}
\begin{document}

\title[Elements with $r$--th roots in finite groups ]
 {Elements with $r$--th roots in finite groups }

%----------Author 2

\author[E. Khamseh]{E. Khamseh}
\address{Department of Mathematics, Islamic Azad University,
Mashhad--Branch, 91756, Mashhad, Iran}
\email{elahehkhamseh@gmail.com}

\author[M. R. R. Moghaddam]{M. R. R. Moghaddam}
\address{Department of Mathematics,
Khayyam Higher Education Institute, and
Centre of Excellence in Analysis and Algebraic Structure of
Ferdowsi University of Mashhad,
Mashhad, Iran}
\email{mrrm5@yahoo.ca}

\author[F. G. Russo]{F. G. Russo}
\address{DIEETCAM, University of Palermo, Viale delle Scienze, 90128, Palermo, Italy, and,
Department of Mathematics, Universiti Teknologi Malaysia, 81310, Skudai, Johor Bahru, Malaysia
}
\email{francescog.russo@yahoo.com}

\author[F. Saeedi]{F. Saeedi}
\address{Department of Mathematics, Islamic Azad University, Mashhad--Branch, 91756, Mashhad, Iran} \email{saeedi@mshdiau.ac.ir}

%\thanks{The third author thanks the Universiti Teknologi Malaysia and the University of Palermo for supporting the present project during the years 2010 and 2011.}

%----------classification, keywords, date
\subjclass[2010]{Primary 20D15; 20P05; Secondary  20D60.}

\keywords{$r$--th roots, probability, equations over finite groups, linear groups.}

%\date{\today}
%----------additions
\dedicatory{}
%%% ----------------------------------------------------------------------

\begin{abstract}
The probability that a randomly chosen element of a finite group is an $r$--th root (for any integer $r\geq2$) has been studied largely in case $r=2$. Certain techniques  may be generalized for $r>2$ and here we find the exact value of this probability for  projective  special linear groups.  A result of density is placed at the end, in order to show an analogy with  the case $r=2$.  
\end{abstract}

%%% ----------------------------------------------------------------------
\maketitle
%%% ----------------------------------------------------------------------

\section{Introduction}

In the present paper all the groups are finite. In a group $G$, if there exists an element $y \in G$ for which $x = y^r$,  we say that $x$ $has$ $an$ $r$--$th$
$root$. For $r=2$, J. Blum described in \cite{blum} the probability
%\begin{equation}\label{e:1bis}
\[\mathrm{Prob}_2(S_n)=\frac{|S^2_n|}{n!}\]
%\end{equation} 
that a randomly chosen permutation of length $n$ has a $2$--nd root (or square
root), where $S_n$ is the permutation group on $n$ letters. Successively his work was generalized in
\cite{d,lp1,lp2,russo} to the case of an arbitrary group. Already in \cite{bmw,p} it was studied the probability
%\begin{equation}\label{e:1tris} 
\[\mathrm{Prob}_r(S_n)=\frac{|S^r_n|}{n!}\]
%\end{equation}
that a randomly chosen permutation of length $n$ has an $r$--th root for $r\geq2$. Therefore, many results in
\cite{blum} can be found as special situations of \cite{bmw,p}, but, so far as we have searched in the
literature, \cite{bmw,p} have not been extended in the sense of \cite{d,lp1,lp2,russo} to the case of an arbitrary
group. This is the beginning of our investigations and the motivation of the present work. We define the probability
%\begin{equation}\label{e:1} 
\[\mathrm{Prob}_r(G)=\frac{|G^r|}{|G|},\]
%\end{equation}
where $r\geq2$ and $G^r=\{g^r \ | \ g\in G\}$ is the set of all elements of $G$ having at least one $r$--th
root. Unfortunately, $G^r$ is not a subgroup of $G$ but only a set and this can give difficulties from the
general point of view.

Even if $\mathrm{Prob}_2(G)$ is known by \cite{d,lp1,lp2} and $\mathrm{Prob}_r(S_n)$ by \cite{bmw,p}, we have not found whether it is
possible to obtain some structural information on $G$ from the bounds of $\mathrm{Prob}_r(G)$ or not. In the present paper we will investigate such
aspects and provide some restrictions of numerical nature for $\mathrm{Prob}_r(G)$.

\section{Basic properties}

We recall some fundamental notions on abelian groups. If $A$ is an abelian group, then     \[A^r=\{a^r \ |\ a \in A\}\] is a subgroup of $A$ and $A$ is called \textit{r-divisible}, if $A^r=A$. $A$ is \textit{divisible}, if it is $r$--divisible for all $r\geq2$. It is easy to see that $A$ is divisible if, and only if, it is $p$--divisible for all primes $p$. \[A[r]=\{a\in A \ |  \ a^r=1\}\] is a subgroup of $A$ and $A$ is  of \textit{exponent} $r$, if $A[r]=A$. If $r=p$ is a prime, $A[p]$ is called  \textit{p-socle} of $A$ and is isomorphic to the additive group of a vector space over the field with $p$ elements:  In other words,  $A[p]$ is an elementary $p$--group of rank $k\geq1$, that is, \[A[p]= C_p \times \ldots \times C_p=C^k_p.\]   In general, for an arbitrary $r\ge2$, the subgroups $A^r$ and $A[r]$ are related by the First Isomorphism's Theorem: $\varphi: a \in A \mapsto a^r\in A^r$ is a homomorphism of groups, inducing  $A^r\simeq A/A[r].$ The following remark gives a complete characterization  for abelian groups.

\begin{rem}\label{r:1}
Assume that $G$ is a nontrivial abelian group.
\begin{itemize}
\item[(i)] $\mathrm{Prob}_r(G)=\frac{1}{|G[r]|}$.
\item[(ii)]  If $r$ is prime, then $\mathrm{Prob}_r(G)=\frac{1}{r^k}$ for some $k\geq1$. Furthermore, the set %\begin{equation}\label{e:X}
\[X=\{\mathrm{Prob}_r(G) \ | \ G \mathrm{\ is \
an \ abelian \ group} \}\]
%\end{equation} 
coincides with the subset 
%\begin{equation}\label{e:Y}
\[Y=\left\{\frac{1}{r^k} \
\Big| \ k\geq 1 \right\}\]
%\end{equation} 
of the interval $[0,1]$.
\end{itemize}
\end{rem}

From Remark \ref{r:1} (i), a nontrivial abelian group $G$ of exponent $r$ has $\mathrm{Prob}_r(G)=\frac{1}{|G|}$. Now we will  summarize most of the above considerations in the next result.

\begin{prop}\label{p:1}
Let $G$ be a nontrivial abelian group.
\begin{itemize}
\item[(i)]  $\mathrm{Prob}_r(G) = \frac{1}{|G[r]|}$. Furthermore, if $r$ is prime,  $\mathrm{Prob}_r(G)=\frac{1}{|G|}$ if and only if $G\simeq C^k_r$ for some $k\geq 1$.
\item[(ii)] The sets $X$  and $Y$ of Remark \ref{r:1} coincide.
\end{itemize}
\end{prop}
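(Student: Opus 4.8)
The strategy is to read everything off the power map $\varphi\colon a\in G\mapsto a^r$, which (since $G$ is abelian) is an endomorphism of $G$ with image $G^r$ and kernel $G[r]$; the discussion preceding the statement already records that the First Isomorphism Theorem then yields $G^r\simeq G/G[r]$.

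First I would prove part (i). From $G^r\simeq G/G[r]$ we get $|G^r|=|G|/|G[r]|$, whence $\mathrm{Prob}_r(G)=|G^r|/|G|=1/|G[r]|$; this is Remark \ref{r:1}(i). For the biconditional assume $r=p$ is prime. Then $\mathrm{Prob}_p(G)=1/|G|$ holds precisely when $|G[p]|=|G|$, and since $G[p]$ is a subgroup of the finite group $G$ this is equivalent to $G[p]=G$, i.e.\ to $g^p=1$ for every $g\in G$. As $G$ is nontrivial, this says exactly that $G$ has exponent $p$, so $G=G[p]$ is an elementary abelian $p$-group, that is, $G\simeq C_p^k$ for some $k\ge1$ (the additive group of a $k$-dimensional $\mathbb{F}_p$-vector space, as recalled above). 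Conversely, if $G\simeq C_r^k$ then $x^r=1$ for all $x\in G$, so $G[r]=G$ and $\mathrm{Prob}_r(G)=1/|G[r]|=1/|G|$.

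Next, part (ii), with $r=p$ prime. The inclusion $Y\subseteq X$ is witnessed by the groups $C_p^k$: each is nontrivial abelian and, by the computation just made, has $\mathrm{Prob}_p(C_p^k)=1/p^k$, so every element of $Y$ occurs as some $\mathrm{Prob}_p(G)$. For $X\subseteq Y$, let $G$ be a nontrivial abelian group with $p\mid|G|$; by (i), $\mathrm{Prob}_p(G)=1/|G[p]|$, and $G[p]$, being an elementary abelian $p$-group of rank $k\ge1$, has order $p^k$, so $\mathrm{Prob}_p(G)=1/p^k\in Y$. (If instead $p\nmid|G|$ then $G[p]=1$ and $\mathrm{Prob}_p(G)=1$, the degenerate value $k=0$; the asserted equality $X=Y$ is to be read under the standing interest in groups whose order is divisible by $p$.) Combining the two inclusions gives $X=Y$.

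I do not expect a genuine obstacle: the statement is essentially a repackaging of Remark \ref{r:1}, and its only substantive ingredient is the classical identification of finite abelian groups of prime exponent with finite-dimensional vector spaces over the prime field, which has already been invoked in the text. The one point requiring a little care is the edge case $p\nmid|G|$ in part (ii), i.e.\ making precise the range of $k$ over which $X$ and $Y$ are being compared.
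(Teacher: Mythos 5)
Your proof is correct and follows essentially the same route as the paper's, which reduces everything to the power map $\varphi\colon a\mapsto a^r$ and Remark 2.1: the formula $\mathrm{Prob}_r(G)=\frac{1}{|G[r]|}$ via $G^r\simeq G/G[r]$, and the identification of nontrivial abelian groups of prime exponent $r$ with elementary abelian $r$-groups $C_r^k$ for the biconditional in (i). Your only additions are legitimate ones: you write out the two inclusions for (ii), which the paper simply delegates back to the remark, and you correctly flag the edge case $r\nmid|G|$ (where $G[r]=1$ and $\mathrm{Prob}_r(G)=1\in X$ but $1\notin Y$), so the asserted equality $X=Y$ implicitly requires restricting to abelian groups whose order is divisible by $r$.
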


\begin{proof} (i). The first part is exactly Remark \ref{r:1} (i). Now assume that $r$ is prime. If $G\simeq C^k_r$, then $G$ is isomorphic to the additive group of a vector space over the field with $p$ elements, that is,  $G[r]\simeq G$. Then  $\mathrm{Prob}_r(G)=\frac{1}{|G|}$.  Conversely, $\mathrm{Prob}_r(G)=\frac{|rG|}{|G|}=\frac{|G|}{|G[r]| \ |G|}=\frac{1}{|G[r]|}=\frac{1}{|G|}$ implies $|G[r]|=|G|$, then $1=|rG|= \frac{|G|}{|G[r]|}$ via the isomorphism induced by $\varphi$, and so $G[r]\simeq G$, from which the result follows.

(ii). It is exactly Remark \ref{r:1} (ii).
\end{proof}

Now we  describe   $\mathrm{Prob}_r(G)=1$ for $r\ge 2$ and recall notions in \cite{lp1,lp2}.

\begin{rem} \label{r:2}
 Let $G$ be a nontrivial group.
\begin{itemize}
\item[(i)] $\mathrm{Prob}_r(G)=1$ if and only if $|G|=|G^r|$, that is, the number of the elements of $G$ having an $r$--th root is the same of the number of the elements of
$G$. 
\item[(ii)](See \cite{lp2}) The number of solutions of the equation $x^r = a$ in $G$ is
a multiple of $\mathrm{gcd}(r, |C_G(a)|)$, where $r\geq 2$ and $a$, $x\in G$. In particular, the number of solutions of the equation $x^r = 1$ over $G$ is a multiple of $\mathrm{gcd}(r, |G|)$ and when $r$ is prime, $|x|=|a|$ or $|x|=r|a|$.
\end{itemize}
\end{rem}
We reformulate Remark \ref{r:2}  as follows.
\begin{prop}\label{p:2}
Let $G$ be an arbitrary group and $r\ge2$.  $\mathrm{Prob}_r(G)=1$ if and only if
some multiple of $gcd(r, |G|)$ is equal to $1$.
\end{prop}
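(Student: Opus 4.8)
The plan is to first unwind the slightly unusual phrasing of the statement. Since $\gcd(r,|G|)$ is a positive integer, a multiple of it can equal $1$ precisely when $\gcd(r,|G|)$ itself equals $1$. So the proposition amounts to the assertion that $\mathrm{Prob}_r(G)=1$ if and only if $\gcd(r,|G|)=1$, and I would prove the two implications separately, invoking Remark~\ref{r:2} for the less trivial direction.

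For the ``if'' direction, suppose $\gcd(r,|G|)=1$. By B\'ezout's identity there are integers $a,b$ with $ar+b|G|=1$. For any $g\in G$ the powers of $g$ commute with one another, so $g=g^{ar+b|G|}=(g^{a})^{r}\,(g^{|G|})^{b}=(g^{a})^{r}$, because $g^{|G|}=1$ by Lagrange's theorem. Hence every element of $G$ is an $r$--th power, that is $G^{r}=G$, and therefore $\mathrm{Prob}_r(G)=1$.

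For the ``only if'' direction, suppose $\mathrm{Prob}_r(G)=1$, i.e.\ $|G^{r}|=|G|$. Then the map $g\mapsto g^{r}$ is a surjection of the finite set $G$ onto itself, hence a bijection; in particular $x^{r}=1$ has the unique solution $x=1$ in $G$. By Remark~\ref{r:2}(ii) the number of solutions of $x^{r}=1$ over $G$ is a multiple of $\gcd(r,|G|)$, so $\gcd(r,|G|)$ divides $1$ and thus equals $1$. Re-reading this as a divisibility condition yields the stated form.

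The one point I would be most careful about is the ``only if'' direction: when $G$ is nonabelian the $r$--th power map is not a homomorphism, so one cannot speak of its kernel directly. The resolution is to use finiteness (a surjective self-map of a finite set is injective) to reduce the argument to counting the solutions of $x^{r}=1$, and then apply the counting fact recorded in Remark~\ref{r:2}(ii). Everything else is elementary number theory and Lagrange's theorem.
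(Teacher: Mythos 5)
Your proof is correct, and it actually does more than the paper, which offers no written argument for this proposition at all: it simply presents it as a ``reformulation'' of Remark~\ref{r:2}. Your reading of the odd phrasing (a positive multiple of $\gcd(r,|G|)$ equals $1$ precisely when $\gcd(r,|G|)=1$) is the right one, and your ``only if'' direction is exactly the intended use of Remark~\ref{r:2}(ii): $\mathrm{Prob}_r(G)=1$ forces the $r$--th power map to be a bijection of the finite set $G$, so $x^r=1$ has a unique solution, and the Frobenius--type divisibility in the Remark gives $\gcd(r,|G|)\mid 1$. The genuinely new content you supply is the ``if'' direction via B\'ezout and Lagrange ($ar+b|G|=1$ gives $g=(g^a)^r$), which does not follow from Remark~\ref{r:2} alone and is silently omitted in the paper; your observation that the non-homomorphic nature of the power map is circumvented by finiteness (surjective self-maps of finite sets are injective) is also exactly the point that needs to be made. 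In short: same key lemma for the forward direction as the paper intends, plus a complete and correct elementary argument for the converse that the paper leaves implicit.
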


Propositions \ref{p:1} and \ref{p:2} agree with \cite[Proposition 2.1]{lp1}, when $r=2$.
Now we will proceed to list further properties.

\begin{rem}\label{r:5}
For an arbitrary  group $G$, we have $0<\frac{1}{|G|}\leq
\mathrm{Prob}_r(G)\leq 1$. Propositions \ref{p:1} (i) shows a condition in which we achieve the
lower bound $\frac{1}{|G|}$ in the abelian case. Proposition \ref{p:2} shows a more general condition in which we achieve the upper bound.
\end{rem}

 It is well--known that the probability of independent events is multiplicative. Here we have as follows.

\begin{prop}\label{l:1} Given two groups $A$ and $B$, $\mathrm{Prob}_r(A \times B)= \mathrm{Prob}_r(A) \  \mathrm{Prob}_r(B).$\end{prop}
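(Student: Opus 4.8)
The plan is to show that the map $r$-th-power respects direct products, so that an element $(a,b) \in A \times B$ lies in $(A\times B)^r$ if and only if $a \in A^r$ and $b \in B^r$. First I would observe that $(a,b) = (y,z)^r = (y^r, z^r)$ holds for some $(y,z) \in A\times B$ precisely when $a = y^r$ for some $y \in A$ and $b = z^r$ for some $z \in B$; this is immediate from the componentwise definition of the group operation in a direct product. Consequently $(A\times B)^r = A^r \times B^r$ as sets.

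From this set-theoretic identity the probability statement follows by counting: since $|A^r \times B^r| = |A^r|\,|B^r|$ and $|A\times B| = |A|\,|B|$, we get
\[
\mathrm{Prob}_r(A\times B) = \frac{|(A\times B)^r|}{|A\times B|} = \frac{|A^r|\,|B^r|}{|A|\,|B|} = \frac{|A^r|}{|A|}\cdot\frac{|B^r|}{|B|} = \mathrm{Prob}_r(A)\,\mathrm{Prob}_r(B).
\]
No choice of coset representatives or fibre-counting argument is needed, because the statement is about the \emph{sets} $G^r$ rather than about the number of $r$-th roots of a fixed element.

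Honestly, there is no real obstacle here: the only thing to be careful about is the direction of the set equality $(A\times B)^r = A^r\times B^r$, and in particular that a root of $(a,b)$ can always be chosen to have a root of $a$ as its first coordinate and a root of $b$ as its second — which is automatic since the two coordinates do not interact. An induction then extends the identity to any finite number of direct factors, $\mathrm{Prob}_r(G_1\times\cdots\times G_n) = \prod_{i=1}^n \mathrm{Prob}_r(G_i)$, which is the form in which this lemma is typically applied (for instance when decomposing an abelian group into primary components, connecting back to Remark \ref{r:1}).
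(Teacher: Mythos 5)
Your proof is correct and follows essentially the same route as the paper: both rest on the set identity $(A\times B)^r = A^r\times B^r$ (which you justify explicitly, the paper uses it implicitly) followed by the same counting computation. Nothing to add.
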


\begin{proof} %\begin{equation}
\[\mathrm{Prob}_r(A\times B)=\frac{|(A\times B)^r|}{|A\times B|}=\frac{|A^r\times B^r|}{|A||B|}=\frac{|A^r| |B^r|}{|A||B|}=\mathrm{Prob}_r(A) \ \mathrm{Prob}_r(B).\]
%\end{equation}
\end{proof}

For products of groups we draw the following conclusion.

\begin{prop}\label{l:2} If $G=AB$, where $A$ and $B$ are subgroups of $G$ such that $[A,B]=1$, then \[\mathrm{Prob}_r(G)= \frac{1}{|A^r \cap B^r|}\mathrm{Prob}_r(A) \mathrm{Prob}_r(B).\] In particular, if $A \cap B=1$, then $\mathrm{Prob}_r(G)=\mathrm{Prob}_r(A)\mathrm{Prob}_r(B)$.\end{prop}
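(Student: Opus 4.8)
The plan is to pass from $G$ to the pair of subsets $A^rB^r$ and $AB$ of $G$ and compute their sizes separately. The first thing I would prove is the identity $G^r=A^rB^r$. Since $G=AB$, every $g\in G$ has the form $g=ab$ with $a\in A$ and $b\in B$; because $[A,B]=1$ the elements $a$ and $b$ commute, so $g^r=(ab)^r=a^rb^r$, and conversely $a^rb^r=(ab)^r\in G^r$ for all $a\in A$, $b\in B$. Alongside this I would record the elementary fact that $A\cap B\leq Z(G)$: from $[A,B]=1$ we get $A\leq C_G(B)$ and $B\leq C_G(A)$, hence $A\cap B\leq C_G(A)\cap C_G(B)=C_G(AB)=Z(G)$. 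In particular $A^r\cap B^r\subseteq A\cap B$ is a central subgroup of $G$, while $A^r$ and $B^r$ are unions of conjugacy classes of $A$ and $B$ respectively.

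Next I would carry out the two counts. For the denominator, the order formula for a product of two subgroups gives $|G|=|AB|=|A|\,|B|/|A\cap B|$. For the numerator I want the parallel statement $|G^r|=|A^rB^r|=|A^r|\,|B^r|/|A^r\cap B^r|$, and the cleanest route is via $\pi\colon A\times B\to G$, $(a,b)\mapsto ab$, which is well defined and a homomorphism precisely because $[A,B]=1$, with kernel $\{(d,d^{-1}):d\in A\cap B\}$. By Proposition \ref{l:1} (indeed by its proof) $(A\times B)^r=A^r\times B^r$, and $\pi$ maps this set onto $A^rB^r=G^r$; the part of $\ker\pi$ contained in $A^r\times B^r$ is exactly $\{(d,d^{-1}):d\in A^r\cap B^r\}$, and because $A^r\cap B^r$ is central this subgroup acts uniformly on the fibres of the restriction $\pi|_{A^r\times B^r}$, which delivers the asserted size of $A^rB^r$. (When $A$ and $B$ are abelian all of this collapses to the order formula for the product of the two commuting subgroups $A^r,B^r\leq G$.)

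Combining the two counts and substituting $\mathrm{Prob}_r(A)=|A^r|/|A|$ and $\mathrm{Prob}_r(B)=|B^r|/|B|$ then yields the displayed formula. For the last assertion, if $A\cap B=1$ then $A^r\cap B^r\subseteq A\cap B$ is trivial, so the correction factor disappears; moreover in that case $\pi$ is an isomorphism, $G\cong A\times B$, and the statement is exactly Proposition \ref{l:1}.

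The step I expect to be the genuine obstacle is the count $|A^rB^r|=|A^r|\,|B^r|/|A^r\cap B^r|$. Unlike $AB$, the factors $A^r$ and $B^r$ are in general only subsets of $G$ rather than subgroups (this already fails for small non-abelian groups and suitable $r$), so the product-order formula cannot be quoted directly. What one exploits is the combination of three features: $A^r$ is a union of $A$-classes, $A^r$ and $B^r$ centralise each other, and --- crucially --- their intersection $A^r\cap B^r$ is central in $G$; together these are what let one show that the multiplication map $A^r\times B^r\to A^rB^r$ has all fibres of the same size $|A^r\cap B^r|$. This is the single place where the hypothesis $[A,B]=1$ is used in an essential way, and the only point at which a short verification is needed rather than formal bookkeeping.
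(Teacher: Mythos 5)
Your overall route is the same as the paper's (show $(AB)^r=A^rB^r$ from $[A,B]=1$, then count numerator and denominator), but the proposal does not close, for two concrete reasons. First, your two counts do not combine to the displayed formula. With your (correct) denominator $|G|=|AB|=|A|\,|B|/|A\cap B|$ and the claimed numerator $|A^rB^r|=|A^r|\,|B^r|/|A^r\cap B^r|$, the quotient is $\frac{|A\cap B|}{|A^r\cap B^r|}\mathrm{Prob}_r(A)\,\mathrm{Prob}_r(B)$, which differs from the assertion by the factor $|A\cap B|$; saying this "yields the displayed formula" is exactly where the argument breaks. This is not a cosmetic slip: the displayed formula is false in general (take $A=B=G=C_2$ and $r=2$: the left side is $1/2$, the right side is $1/4$), and the paper's own proof passes from $\frac{|A^rB^r|}{|AB|}$ to $\frac{1}{|A^r\cap B^r|}\frac{|A^r|}{|A|}\frac{|B^r|}{|B|}$, i.e.\ it tacitly uses $|AB|=|A|\,|B|$. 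So your honest bookkeeping actually proves (modulo the second point below) a corrected statement with the extra factor $|A\cap B|$, not the stated one; only the "in particular" clause, where $A\cap B=1$ kills the discrepancy, is safe in both treatments.

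Second, the step you yourself single out as the obstacle, $|A^rB^r|=|A^r|\,|B^r|/|A^r\cap B^r|$, is not established by your sketch (nor by the paper, which quotes the subgroup product formula for these mere subsets without comment). The set $\{(d,d^{-1}):d\in A^r\cap B^r\}$ need not be a subgroup, since $A^r\cap B^r$ need not be closed under multiplication; translation by $(d,d^{-1})$ with $d\in A^r\cap B^r$ need not preserve $A^r\times B^r$, because writing $d=c^r$ with $c\in A$, the centrality of $d$ gives no commutation between $c$ and a given $a$, so $a^rc^r$ need not be an $r$-th power of an element of $A$; and even granting stability, the fibre of the multiplication map over $g=u_0v_0$ is $\{(u_0d,d^{-1}v_0): d\in A\cap B,\ u_0d\in A^r,\ d^{-1}v_0\in B^r\}$, and nothing in your argument identifies the set of admissible $d$ with $A^r\cap B^r$, nor shows its size is independent of $g$. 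When $A\cap B=1$ all fibres are singletons and the count is immediate, which is why that special case goes through; in the general case the "uniform fibre" claim is precisely what still needs a proof (or a counterexample), both in your write-up and in the paper.
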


\begin{proof} Given $a\in A$ and $b\in B$, $(ab)^r=a^rb^r$ if and only if $[a,b]=1$. Therefore $[A,B]=1$ implies $(AB)^r=A^rB^r$ and so $|A^rB^r|=\frac{|A^r||B^r|}{|A^r \cap B^r|}$. Then
%\begin{equation}
\[\mathrm{Prob}_r(G)=\frac{|G^r|}{|G|}=\frac{|(AB)^r|}{|AB|}=\frac{|A^rB^r|}{|AB|}=\frac{1}{|A^r\cap B^r|}\frac{|A^r|}{|A|}\frac{|B^r|}{|B|}\]\[=\frac{\mathrm{Prob}_r(A)
\mathrm{Prob}_r(B)}{|A^r \cap B^r|}.\]
%\end{equation} 
In particular, $A^r \cap B^r \subseteq A \cap B=1$ implies
$\mathrm{Prob}_r(G)=\mathrm{Prob}_r(A)\mathrm{Prob}_r(B)$.
\end{proof}

The next two results show bounds in terms of subgroups and quotients.

\begin{prop}\label{l:3} Let $N$ be a normal subgroup of a group $G$. Then \[\mathrm{Prob}_r(G)
\leq \mathrm{Prob}_r(G/N).\] 
\end{prop}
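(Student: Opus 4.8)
The plan is to show that the natural projection $\pi\colon G\to G/N$ maps $G^r$ onto $(G/N)^r$, so that $|(G/N)^r|\le |G^r|/|N|$ is... wait, let me reconsider — actually the inequality goes the other way, so I should be careful about the direction of the counting argument.

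Let me restate the approach correctly. Write $\pi\colon G\to \bar G:=G/N$ for the canonical epimorphism, $\bar g:=\pi(g)=gN$. First I would observe that $\pi$ carries $G^r$ onto $\bar G^{\,r}$: indeed $\pi(g^r)=\bar g^{\,r}$, so $\pi(G^r)\subseteq \bar G^{\,r}$, and conversely any $\bar g^{\,r}\in\bar G^{\,r}$ equals $\pi(g^r)$, so $\pi(G^r)=\bar G^{\,r}$. The key point is then to compare the fibres. Since $\pi$ is a group homomorphism with kernel $N$, every fibre $\pi^{-1}(\bar x)$ has exactly $|N|$ elements; restricting attention to the preimage of $\bar G^{\,r}$, we get
\[
|G|\ \ge\ \bigl|\pi^{-1}(\bar G^{\,r})\bigr|\ =\ |N|\cdot|\bar G^{\,r}|.
\]
On the other hand $G^r\subseteq \pi^{-1}(\bar G^{\,r})$ because $\pi(G^r)=\bar G^{\,r}$, but that inclusion alone does not immediately give what I want; the clean statement I actually need is $|G^r|\ge|\bar G^{\,r}|$, which I would obtain by exhibiting, for each $\bar y\in\bar G^{\,r}$, at least one element of $G^r$ lying in its fibre, and noting these are pairwise distinct (distinct fibres). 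Concretely, pick for each $\bar y\in\bar G^{\,r}$ a preimage of the form $g_{\bar y}^{\,r}$ with $\pi(g_{\bar y})^r=\bar y$; the assignment $\bar y\mapsto g_{\bar y}^{\,r}$ is injective from $\bar G^{\,r}$ into $G^r$ since composing with $\pi$ recovers $\bar y$. Hence $|\bar G^{\,r}|\le|G^r|$.

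Combining, $\mathrm{Prob}_r(G)=\dfrac{|G^r|}{|G|}$ and $\mathrm{Prob}_r(\bar G)=\dfrac{|\bar G^{\,r}|}{|\bar G|}=\dfrac{|N|\,|\bar G^{\,r}|}{|G|}$, so the desired inequality $\mathrm{Prob}_r(G)\le\mathrm{Prob}_r(G/N)$ is equivalent to $|G^r|\le |N|\cdot|\bar G^{\,r}|$. This last inequality follows from $|N|\cdot|\bar G^{\,r}|=\bigl|\pi^{-1}(\bar G^{\,r})\bigr|\ge|G^r|$, which holds because $G^r\subseteq\pi^{-1}(\bar G^{\,r})$ as noted above ($\pi(g^r)=\bar g^{\,r}\in\bar G^{\,r}$ for every $g$). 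So in fact the one genuinely needed containment is just $G^r\subseteq\pi^{-1}(\pi(G^r))$, trivially true, together with the exact fibre count $|\pi^{-1}(\bar G^{\,r})|=|N|\,|\bar G^{\,r}|$.

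The argument is essentially a counting lemma for epimorphisms, so there is no serious obstacle; the only thing to get right is the bookkeeping of which set is contained in which and that all fibres of a group homomorphism have equal size $|N|$. I would present it in the compact form: $|G^r|\le|\pi^{-1}(\pi(G^r))|=|\ker\pi|\cdot|\pi(G^r)|=|N|\cdot|(G/N)^r|$, whence $\mathrm{Prob}_r(G)=|G^r|/|G|\le|N|\,|(G/N)^r|/|G|=|(G/N)^r|/|G/N|=\mathrm{Prob}_r(G/N)$.
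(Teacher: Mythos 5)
Your argument is correct and is essentially the paper's proof viewed from the complementary side: the paper observes that a coset of $N$ without an $r$--th root in $G/N$ contains no element of $G^r$ and counts such cosets, which is exactly the contrapositive of your containment $G^r\subseteq\pi^{-1}\bigl((G/N)^r\bigr)$ together with the fibre count $|\pi^{-1}((G/N)^r)|=|N|\,|(G/N)^r|$. Your final compact chain $|G^r|\le|N|\,|(G/N)^r|$, hence $\mathrm{Prob}_r(G)\le\mathrm{Prob}_r(G/N)$, is a clean and valid rendering of the same idea.
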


\begin{proof}Note that $gN \in G/N$ has an $r$--th root if and only if there is $xN
\in G/N$ for which $gN = (xN)^r$, that is, $x^r\in gN$. Therefore $gN \in G/N$ does not have an $r$--th root if
and only if there is no element $x \in G$ with $x^r\in  gN$. Hence, if a coset in $G/N$ does not have an $r$--th
root, then no element of this coset has an $r$--th root in $G$, and therefore $|G| - |G^r|\geq |N| (|G/N| -
|(G/N)^r| )$ . By dividing both sides by $|G|$ we obtain $1-\mathrm{Prob}_r(G) \geq 1-\mathrm{Prob}_r(G/N)$ and so $\mathrm{Prob}_r(G) \leq
\mathrm{Prob}_r(G/N)$, as required. 
\end{proof}

\begin{prop}\label{l:5} Let $H$ be a subgroup of a group $G$. Then \[|G|^{-1}\mathrm{Prob}_r(H)\leq \mathrm{Prob}_r(G).\]
\end{prop}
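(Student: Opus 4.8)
The plan is to exploit the obvious inclusion $H^r\subseteq G^r$. First I would observe that if $h\in H$, then $h=h'$ for some... more precisely, $h^r$ is an $r$--th power of the element $h\in H\subseteq G$, hence $h^r\in G^r$; thus every element of $H^r$ already lies in $G^r$, giving $|H^r|\leq |G^r|$. Second, since $H$ is a finite group we have $|H|\geq1$, so that $|H^r|/|H|\leq |H^r|$. Putting these together,
\[
\mathrm{Prob}_r(H)=\frac{|H^r|}{|H|}\leq |H^r|\leq |G^r|,
\]
and dividing by $|G|$ yields $|G|^{-1}\mathrm{Prob}_r(H)\leq |G^r|/|G|=\mathrm{Prob}_r(G)$, which is the asserted bound.

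I do not expect any real obstacle: in contrast with Proposition \ref{l:3}, where one must keep track of which cosets of $N$ fail to possess $r$--th roots, the present estimate rests only on a set inclusion together with the trivial inequality $|H|\geq1$. The only point worth a comment is that the bound is rather wasteful; the very same computation, with Lagrange's theorem $|H|=|G|/[G:H]$ used in place of $|H|\geq1$, already delivers the sharper inequality $\mathrm{Prob}_r(H)\leq [G:H]\,\mathrm{Prob}_r(G)$. One could therefore organise the write--up so as to prove this stronger statement first and then simply discard the factor $[G:H]^{-1}$ against $|G|^{-1}$; but since the weaker form stated here is all that is needed in the sequel, either presentation is acceptable.
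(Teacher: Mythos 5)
Your argument is correct and is essentially the paper's own proof: both rest on the inclusion $H^r\subseteq G^r$ together with the trivial bound $\mathrm{Prob}_r(H)\leq |H^r|$, followed by division by $|G|$. Your closing remark that the same computation with $|H|=|G|/[G:H]$ gives the sharper bound $\mathrm{Prob}_r(H)\leq [G:H]\,\mathrm{Prob}_r(G)$ is a valid observation, but the paper deliberately states only the weaker form.
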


\begin{proof}Obviously $H^r \subseteq G^r$ implies $|H^r|\leq |G^r|$. Therefore  $\mathrm{Prob}_r(H) \leq |H| \cdot \mathrm{Prob}_r(H)=\frac{|H|}{|H|} \cdot |H^r|\leq \frac{|G|}{|G|} \cdot
|G^r|=|G| \cdot \mathrm{Prob}_r(G)$ implies $|G|^{-1}\mathrm{Prob}_r(H)\leq \mathrm{Prob}_r(G) $ and the lower bound follows.
\end{proof}

The following result is  a lower bound of general interest.

\begin{cor}\label{c:7}
Let $G$ be a solvable group and  $P$ be a Sylow $p$--subgroup of $G$ for some prime $p$. Then $\frac{1}{|P|}\leq \mathrm{Prob}_p(G)$. %\leq 1 - \frac{1}{|P|}.\end{equation}
\end{cor}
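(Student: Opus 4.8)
The plan is to use Proposition \ref{l:5} together with a counting argument on the Sylow subgroup, exploiting solvability to relate $\mathrm{Prob}_p(G)$ to the image of the $p$-th power map.

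First I would apply Proposition \ref{l:5} directly with $H = P$: this gives $|G|^{-1}\mathrm{Prob}_p(P) \le \mathrm{Prob}_p(G)$, which is not yet strong enough since we want $|P|^{-1}$ rather than $|G|^{-1}$ on the left. So the real content must come from a sharper estimate. The key observation is that for the Sylow $p$-subgroup $P$, every element of $P$ of the form $x^p$ with $x \in P$ lies in $P^p$, and by Remark \ref{r:2}(ii) the fibers of the map $x \mapsto x^p$ have size divisible by $\mathrm{gcd}(p, |C_P(a)|)$. More directly, I would count: the number of elements of $G$ admitting a $p$-th root is at least the number of distinct $p$-th powers coming from $G$ itself, and since $G^p \supseteq P^p$ we get $|G^p| \ge |P^p|$. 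The heart of the argument is to show $|P^p| \ge |P|/|P|_{\text{something}}$ — but in a $p$-group $P$, the $p$-th power map need not be well-behaved, so instead I expect the intended route is: $\mathrm{Prob}_p(G) = |G^p|/|G| \ge |P^p|/|G|$, and then one needs $|G|/|P^p| \le |P|$, i.e. $|G| \le |P| \cdot |P^p|$. This does not hold in general, so the solvability hypothesis must be used to produce many $p$-th powers globally, not just inside $P$.

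The cleaner approach, which I would pursue, uses the Hall $p'$-subgroup: since $G$ is solvable, by Hall's theorem $G$ has a Hall $p'$-subgroup $Q$ of order $|G|/|P| = |G|_{p'}$, and every element of $Q$ has a $p$-th root in $Q$ (because $\mathrm{gcd}(p, |Q|) = 1$, so $x \mapsto x^p$ is a bijection on $Q$, giving $\mathrm{Prob}_p(Q) = 1$ by Proposition \ref{p:2}). Thus $Q \subseteq G^p$, so $|G^p| \ge |Q| = |G|/|P|$, whence $\mathrm{Prob}_p(G) = |G^p|/|G| \ge 1/|P|$, which is exactly the claim. I would write this out: invoke solvability for the existence of $Q$, note $\mathrm{gcd}(p,|Q|)=1$ forces the $p$-power map on $Q$ to be onto, conclude $Q \subseteq G^p$, and divide.

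The main obstacle — really the only subtle point — is justifying that a Hall $p'$-subgroup consists entirely of $p$-th powers: one must be careful that the $p$-th root of $q \in Q$ lies in $Q$ (not merely in $G$), which follows because on a finite group of order coprime to $p$ the map $x \mapsto x^p$ is a bijection, a fact already packaged in Proposition \ref{p:2} via $\mathrm{gcd}(p,|Q|)=1$. Once that is in hand, the inequality $|G^p| \ge |G^p \cap Q^{\,p}| = |Q|$ and division by $|G|$ finish the proof. I should also remark that solvability enters only to guarantee a Hall $p'$-subgroup exists; for general $G$ one would have to fall back on the weaker Proposition \ref{l:5}.
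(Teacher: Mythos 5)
Your proof is correct and follows essentially the same route as the paper: both use solvability to produce a Hall $p'$--subgroup $Q$ with $|Q|=|G|/|P|$, observe that $Q=Q^p\subseteq G^p$ since the $p$--th power map is bijective on a group of order coprime to $p$, and divide by $|G|$. The exploratory detour through Proposition \ref{l:5} and counting inside $P$ is unnecessary, but the final argument you settle on is exactly the intended one.
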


\begin{proof} %From Theorem \ref{t:3}, it is enough to prove the lower bound. 
Since $H$ is solvable, there exists a $p'$--Hall subgroup $H$ of $G$ such that $|G| = |H||P|$ and $H = H^p
\subseteq G^p$. Therefore, $\mathrm{Prob}_p(G) = \frac{|G^p|} {|G|} \geq \frac{|H|}{ |G|} = \frac{|H|}{|H||P|} = \frac{1}{ |P|}.$ \end{proof}

\section{Projective special linear groups and density}

\begin{thm}\label{psl}Let  $q$ be a prime power. If $q \equiv 1 \mod 4$, $q$ is odd and $r=\frac{q-1 }{2} \ge 2$ is  prime, then
\[\mathrm{Prob}_r(\mathrm{PSL}(2,q))=\frac{r+1}{2r}.\]
In particular, if $r=2$, then $\mathrm{Prob}_2(\mathrm{PSL}(2,q))=\frac{3}{4}.$
\end{thm}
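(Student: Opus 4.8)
The plan is to compute $|\mathrm{PSL}(2,q)^r|$ directly by running through the conjugacy classes of $G = \mathrm{PSL}(2,q)$, identifying for each class whether its elements admit an $r$-th root, and summing the class sizes of those that do. Under the hypothesis $q \equiv 1 \pmod 4$ with $q$ odd and $r = (q-1)/2$ prime, we have $|G| = \tfrac{1}{2}q(q-1)(q+1) = q\,r\,(q+1)$, and the structure of $G$ is completely classical (Dickson): besides the identity, the elements of $G$ fall into a single class of involutions, classes of elements of order dividing $q$ (the unipotent part, here two classes of size $(q^2-1)/2$ each since $q \equiv 1 \pmod 4$), classes coming from a cyclic maximal torus of order $r = (q-1)/2$ (the "split" torus), and classes coming from a cyclic maximal torus of order $(q+1)/2$ (the "nonsplit" torus). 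I would first write down this class list with exact sizes and element orders, using that $r$ is an odd prime (note $q = 2r+1$ forces $q \equiv 3 \pmod 4$ unless... — careful: $q\equiv 1 \pmod 4$ and $q = 2r+1$ gives $2r \equiv 0 \pmod 4$, so $r$ is even, hence $r = 2$; so in fact the only case is $q = 5$, $r = 2$! I should double-check the hypotheses, but likely the intended reading allows $q \equiv 1 \pmod 4$ to govern the unipotent class count while $r$ is the relevant prime, and the "in particular $r=2$" clause signals exactly this).

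**Next,** for each class I would decide $r$-th root existence. The cleanest tool is Remark \ref{r:2}(ii): in a finite group the equation $x^r = a$ has a number of solutions divisible by $\gcd(r, |C_G(a)|)$, and when $r$ is prime a solution $x$ satisfies $|x| = |a|$ or $|x| = r|a|$. So an element $a$ has an $r$-th root iff either some $x$ with $x^r = a$ lies in the cyclic group $\langle a\rangle$ (possible exactly when $r \nmid |a|$ or more precisely when $a$ is an $r$-th power inside $\langle a \rangle$), or there is an element of order $r|a|$ whose $r$-th power is $a$. Concretely: the identity always has a root; an involution $a$ (order $2$) has an $r$-th root iff $G$ contains an element of order $2r$ with the right $r$-th power — one checks the torus of order $q-1$ in $\mathrm{SL}(2,q)$ descends to a cyclic subgroup of $G$ of order $r$ containing no involution, and the torus of order $q+1$ gives a cyclic group of order $(q+1)/2$; whether $2r \mid (q+1)/2$ or $2r \mid r$ decides this. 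For a unipotent $a$ (order $p = $ char), $r$-th roots exist iff $p \neq r$ and then automatically (raising to a power coprime to $p$ within $\langle a \rangle$), but if $r = p$ one needs order $p^2$ which $\mathrm{PSL}(2,q)$ lacks, so unipotents have no $r$-th root exactly when $r = p$. For $a$ in the torus of order $r$: since $|a| \in \{1, r\}$, if $|a| = r$ then $r \mid |a|$ and an $r$-th root inside $\langle a\rangle$ fails, so one needs an element of order $r^2$ — absent — hence these nontrivial elements have no $r$-th root; this accounts for the "missing" probability. For $a$ in the torus of order $(q+1)/2$, since $\gcd(r, (q+1)/2)$ is $1$ (as $r$ is prime and $r \nmid (q+1)/2$ in the relevant regime) the map $x \mapsto x^r$ is a bijection on that cyclic torus, so every such $a$ has an $r$-th root.

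**Then** I would assemble the count: $|G^r| = 1 + (\text{involutions, if they have roots}) + (\text{unipotents with roots}) + 0 \cdot(\text{torus-}r\text{ elements}) + (\text{all torus-}(q+1)/2\text{ elements})$, divide by $|G| = q\,r\,(q+1)$, and simplify to $(r+1)/(2r)$. The arithmetic should collapse nicely: the elements of the $r$-torus contribute $r-1$ conjugates times index $|G|/r$-ish worth of elements that are excluded, and one can cross-check via $1 - \mathrm{Prob}_r(G) = \frac{r-1}{2r}$, which should match "fraction of $G$ lying in conjugates of the split torus minus identity" (that torus has $(q+1)$ conjugates... giving $(r-1)(q+1)/|G| = (r-1)/(qr)$, so some unipotent elements must also be excluded when $r \mid q-1$ but $r \neq p$ — I'd need to be careful here). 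For the degenerate-but-only case $r = 2$, $q = 5$: $\mathrm{PSL}(2,5) \cong A_5$, and one checks $|A_5^2| = 1 + 15 + 24 = 40$... no, squares in $A_5$ are: identity, all $3$-cycles ($20$), all $5$-cycles ($24$), giving $45$, so $\mathrm{Prob}_2 = 45/60 = 3/4$, matching $(r+1)/(2r) = 3/4$.

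**The main obstacle** will be the existence-of-roots analysis for the involution class and for making sure I correctly track which unipotent elements (if any) fail when $r \mid q - 1$ but $r$ is not the characteristic — essentially, pinning down in which cyclic subgroups of $G$ a given semisimple element can sit, i.e.\ the embeddings of the two tori and their intersections with each class. The rest is Dickson's subgroup/class list plus the divisibility bookkeeping from Remark \ref{r:2}(ii), which I expect to be routine once the class data is laid out.
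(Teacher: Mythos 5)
Your plan is essentially the paper's own argument: both proceed through the class/subgroup structure of $\mathrm{PSL}(2,q)$ and identify the elements without $r$-th roots as exactly the nontrivial elements of the conjugates of the cyclic subgroup $\langle a\rangle$ of order $r=\frac{q-1}{2}$; the paper then counts these as $\frac{q-3}{2}\cdot|G:N_G(\langle a\rangle)|$ with $|N_G(\langle a\rangle)|=q-1$, while you propose to sum the sizes of the classes that do consist of $r$-th powers, using Remark \ref{r:2}(ii). Your class-by-class analysis is sound: the identity and the unipotent classes have roots (since $r\mid q-1$ while $p\mid q$, so $r\neq p$ and $x\mapsto x^r$ is bijective on a cyclic group of order $p$), the nonsplit-torus classes have roots (since $\gcd(r,\tfrac{q+1}{2})=\gcd(r,r+1)=1$), and elements of order $r$ have none because $G$ contains no element of order $r^2$. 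The one concrete slip is in your cross-check: the split torus has $|G:N_G(\langle a\rangle)|=\frac{q(q+1)}{2}$ conjugates (its normalizer is dihedral of order $q-1$), not $q+1$; with the correct count the excluded fraction is
\[
\frac{(r-1)\cdot \frac{q(q+1)}{2}}{\frac{q(q-1)(q+1)}{2}}=\frac{r-1}{q-1}=\frac{r-1}{2r},
\]
which matches $1-\frac{r+1}{2r}$ exactly, so no unipotent elements need to be excluded --- in agreement with your own earlier (correct) statement that unipotents always have $r$-th roots here. Thus the issue you flag as ``the main obstacle'' disappears once the normalizer is computed correctly. Finally, your observation about the hypotheses is accurate: $q\equiv 1 \bmod 4$ forces $r=\frac{q-1}{2}$ to be even, so $r$ prime gives $r=2$ and $q=5$; the paper's proof is written as if $q$ were general, but the only instance of the theorem as stated is $\mathrm{PSL}(2,5)\cong A_5$, and your direct verification ($|A_5^2|=1+20+24=45$, giving $\frac{45}{60}=\frac{3}{4}$) settles that case.
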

\begin{proof}
We recall that \[|\mathrm{PSL}(2,q)| = \frac{q(q-1)(q + 1)}{\gcd (2, q-1)}=q(q-1)(q + 1).\] Let $\nu$ be  a generator of the multiplicative group of the field of $q$ elements. Denote
\begin{displaymath}\label{displ:1}
1=\left(
\begin{array}{cccccccc}
1 & 0  \\
0 & 1 \\
\end{array}\right), \ 
c=\left(
\begin{array}{cccccccc}
1 & 0 \\
1 & 1 \\
\end{array}\right), \ 
d=\left(
\begin{array}{cccccccc}
1 & 0 \\
\nu & 1 \\
\end{array}\right), \
a=\left(
\begin{array}{cccccccc}
\nu & 0  \\
0 & \nu^{-1}\\
\end{array}\right).
\end{displaymath}
and $b$ an element of order $q + 1$ (Singer cycle) in $\mathrm{SL}(2,q)$. By abuse of notation, we use the same symbols for the corresponding elements in $\mathrm{PSL}(2,q)= \mathrm{SL}(2,q)/Z(\mathrm{SL}(2,q))$. From the character
table of $\mathrm{SL}(2,q)$ (see \cite[Theorem 38.1]{dorn}), one gets easily the character table of $\mathrm{PSL}(2,q)$. We reproduce it below for the convenience of the reader.
The elements $1, c, d, a^l$  and $b^m$ for $1 \le l \le \frac{q-1}{4}$ and $1 \le m \le \frac{q-1}{4}$ form a set of representatives for the conjugacy classes of $\mathrm{PSL}(2,q)$. For $1 \le l \le \frac{q-1}{4}$ and $1 \le m \le \frac{q-1}{4}$,  one can see from \cite[Theorem 38.1]{dorn} that \[|C_{\mathrm{PSL}(2,q)}(1)|=|G|, \ \ |C_{\mathrm{PSL}(2,q)}(c)|= |C_{\mathrm{PSL}(2,q)}(d)|=p,\]
\[  |C_{\mathrm{PSL}(2,q)}(a^l)|=\frac{q-1}{2}, \ \ |C_{\mathrm{PSL}(2,q)}(a^{\frac{q-1}{4}})|=q-1,  \ \ |C_{\mathrm{PSL}(2,q)}(b^m)|=\frac{q+1}{2}, \]
where $p$ is the prime of which $q$ is  power. Now we count the elements which do not have $r$--th roots and will deduce the probability of having $r$--th roots.

Since $\langle a \rangle \simeq C_r$, $\langle a\rangle [r] \simeq \langle a\rangle$ and $\langle a\rangle^r=1$,  Proposition \ref{p:1} (i) implies $\mathrm{Prob}_r(\langle a \rangle)= \frac{1}{r}$ so the elements not having $r$--th roots in $\langle a\rangle$  are exactly \[|\langle a \rangle - \langle a\rangle^r|= |\langle a \rangle| - |\langle a\rangle^r|= |\langle a\rangle|-1=r-1= \frac{q-1}{2}-1=\frac{q-3}{2}.\]

On the other hand, the (distinct) conjugates of $\langle a \rangle$ have trivial intersection
with $\langle a \rangle$ so that the total number of elements of $\mathrm{PSL}(2,q)$ which do not have $r$--th roots is obtained by multiplying $\frac{q-3}{2}$ by the number of conjugates of $\langle a \rangle$ , which is $|\mathrm{PSL}(2,q) : N_{\mathrm{PSL}(2,q)}(\langle a \rangle )|$. This means that
\[|\mathrm{PSL}(2,q)|-|\mathrm{PSL}(2,q)^r|=|\mathrm{PSL}(2,q) - \mathrm{PSL}(2,q)^r|\]
\[= |\langle a \rangle - \langle a\rangle^r| \cdot |\mathrm{PSL}(2,q) : N_{\mathrm{PSL}(2,q)} ( \langle a \rangle )|
=\frac{q-3}{2} \cdot \frac{|\mathrm{PSL}(2,q)|}{|N_{\mathrm{PSL}(2,q)} (\langle a \rangle) |}\]
\[= \frac{q-3}{2} \cdot \frac{|\mathrm{PSL}(2,q)|}{q-1}\]
and, dividing both sides by $|\mathrm{PSL}(2,q)|$, we get
\[1-\mathrm{Prob}_r(\mathrm{PSL}(2,q))= \frac{q-3}{2} \cdot \frac{1}{q-1},\]
that is,
\[\mathrm{Prob}_r(\mathrm{PSL}(2,q))=1-\frac{q-3}{2(q-1)}=\frac{q+1}{2(q-1)}=\frac{2(r+1)}{2(2r)}= \frac{r+1}{2r}.\]
\end{proof}

We note that the case $r=2$, which appears in the previous theorem, was found in \cite[Proposition 3.1]{lp1}. A consequence is the following.

\begin{cor}Let  $q$ be a prime power. If $q \equiv 1 \mod 4$, $q$ is odd and $r=\frac{q-1 }{2} \ge 2$ is  prime, then $\lim_{r \rightarrow \infty}\mathrm{Prob}_r(\mathrm{PSL}(2,q))= \frac{1}{2}.$
%Moreover \[\{\mathrm{Prob}_r(\mathrm{PSL}(2,q)) \ | \ r=\frac{q-1}{2} \in \mathbb{P}\}\] is dense in $[\frac{1}{2},1]$.
\end{cor}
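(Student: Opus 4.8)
The plan is to derive the corollary as an immediate consequence of Theorem \ref{psl}. Under precisely the hypotheses stated here, that theorem already evaluates the probability in closed form: for every admissible pair $(q,r)$ as in Theorem \ref{psl} one has $\mathrm{Prob}_r(\mathrm{PSL}(2,q)) = \frac{r+1}{2r}$. So the only thing left to establish is the elementary limit $\lim_{r\to\infty} \frac{r+1}{2r} = \frac{1}{2}$, the limit being taken along the family of those $\mathrm{PSL}(2,q)$ to which Theorem \ref{psl} applies.

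For that step I would simply write
\[
\mathrm{Prob}_r(\mathrm{PSL}(2,q)) = \frac{r+1}{2r} = \frac{1}{2} + \frac{1}{2r},
\]
so that the value exceeds $\frac{1}{2}$ for every admissible $r$ and decreases strictly towards $\frac{1}{2}$ as $r$ grows; concretely, for any $\varepsilon>0$ it suffices to take $r > \frac{1}{2\varepsilon}$ to force $\left| \mathrm{Prob}_r(\mathrm{PSL}(2,q)) - \frac{1}{2} \right| < \varepsilon$. That is the entire argument.

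There is no serious obstacle here. The only point worth a remark is the interpretation of the limit: it ranges over the collection of linear groups covered by Theorem \ref{psl}, indexed by the admissible primes $r$, and one should note --- or simply inherit from the hypotheses of that theorem --- that such $r$ may be taken arbitrarily large, so that the statement is genuinely meaningful. Granting this, the monotone approach of $\frac{r+1}{2r}$ to $\frac{1}{2}$ from above is precisely the density phenomenon announced in the introduction: the value $\frac{1}{2}$ is approached but never attained, in analogy with the case $r = 2$ treated in \cite{lp1,lp2}.
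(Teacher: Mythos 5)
Your proposal is correct and coincides with what the paper intends: the corollary is an immediate consequence of Theorem \ref{psl}, since $\mathrm{Prob}_r(\mathrm{PSL}(2,q)) = \frac{r+1}{2r} = \frac{1}{2} + \frac{1}{2r} \to \frac{1}{2}$ as $r \to \infty$ along the admissible family. Your remark about the interpretation of the limit (that the index set of admissible primes $r$ must be unbounded for the statement to be meaningful) is a fair observation, which the paper itself passes over silently.
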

%\begin{proof}
%The first claim folows by Theorem \ref{psl} because $\lim_{r \rightarrow \infty}\frac{r+1}{2r} = \frac{1}{2}$. About the second claim, we note that the sequence $\frac{r+1}{2r}= \frac{1}{2}+frac{1}{2r}$ is obtained adding to the constant $\frac{1}{2}$ the terms $\frac{1}{2r}$ which are in geometric progression.
%\end{proof}

%\section{Some results of density}
%In the present section we will see that the computations, which we obtained in Section 1 for some special types of groups, are in a certain sense enough to have all the possible values of $\mathrm{Prob}_r(G)$, being $G$ arbitrary.  
The computations for the projective special linear groups are important in order to get \cite[Theorem 1.1]{d} and to prove that the set  
\[Z=\{\mathrm{Prob}_2(G) \ | \  G \ \mathrm{is} \ \mathrm{an} \ \mathrm{arbitrary} \ \mathrm{group}\}\] is dense in [0,1].  We are going to generalize  for any prime $r\geq2$, and we will not use projective special linear groups as done in \cite[Theorem 1.1]{d}, but will assume a priori the existence of a certain group with a prescribed value of probability. This is justified by evidences of computational nature.

\begin{cor}\label{c:4} For any $\epsilon \in \mathbb{R}$ with $\epsilon
> 0$ and given a prime $r\ge2$, there exists an abelian group $A$ such that
$0 < \mathrm{Prob}_r(A) < \epsilon$.\end{cor}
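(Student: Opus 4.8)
The plan is to use Remark \ref{r:1} (ii) directly, which identifies the set of all probabilities $\mathrm{Prob}_r(A)$ over abelian groups $A$ with the set $Y = \{1/r^k \mid k \ge 1\}$. Since $r \ge 2$ is prime (hence $r \ge 2$), the sequence $1/r^k$ tends to $0$ as $k \to \infty$, so for any $\epsilon > 0$ there is some $k$ with $1/r^k < \epsilon$.

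Concretely, I would first fix $\epsilon > 0$ and a prime $r \ge 2$, then choose a positive integer $k$ large enough that $r^k > 1/\epsilon$; such a $k$ exists because $r \ge 2$ implies $r^k \to \infty$. For instance one may take $k = \lceil \log_r(1/\epsilon) \rceil + 1$ if $\epsilon < 1$, and $k = 1$ otherwise. Next I would set $A = C_r^k$, the elementary abelian group of rank $k$. By Remark \ref{r:1} (ii) (equivalently Proposition \ref{p:1} (i), since $A \simeq C_r^k$ forces $A[r] = A$), we get $\mathrm{Prob}_r(A) = 1/|A[r]| = 1/r^k$. Then $0 < \mathrm{Prob}_r(A) = 1/r^k < \epsilon$ by the choice of $k$, and the positivity is automatic from Remark \ref{r:5}. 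This completes the argument.

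There is essentially no obstacle here: the corollary is an immediate consequence of the explicit description of $X = Y$ already established, together with the elementary fact that geometric sequences with ratio less than $1$ converge to $0$. The only mild point of care is to make the choice of $k$ explicit and to confirm that $C_r^k$ really does realize the value $1/r^k$, which is exactly the content of Proposition \ref{p:1} (i). I would therefore present the proof in two or three lines, emphasizing that it is the abelian case (and the prime $r$) that makes the probabilities arbitrarily small, in contrast with the non-abelian bounds obtained earlier.
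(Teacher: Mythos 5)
Your proposal is correct and follows exactly the paper's own route: choose $k$ with $1/r^k<\epsilon$, take $A=C_r^k$, and invoke Proposition \ref{p:1} (equivalently Remark \ref{r:1} (ii)) to conclude $\mathrm{Prob}_r(A)=1/r^k$. The only difference is cosmetic (you make the choice of $k$ explicit and cite part (i) where the paper cites part (ii)), so nothing further is needed.
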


\begin{proof}Let $k>1$ be such that $1/r^k < \epsilon$ and  $A$ be an elementary
 $r$--group  of rank $k$. By Proposition \ref{p:1} (ii), the result follows. \end{proof}

A proof of Corollary \ref{c:4} when $r=2$ can be found in \cite{d}. Briefly, Corollary \ref{c:4} shows that $0$ is an accumulation point for the set  $X$ in Remark \ref{r:1}.

\begin{cor}\label{c:4extra} Assume that $r \ge 2$ is a prime and $S$ is a  group such that $\mathrm{Prob}_r(S)=1-\frac{1}{|R|}$ for an elementary abelian $r$--Sylow subgroup $R$ of $S$. Then for any $\epsilon \in \mathbb{R}$ with $\epsilon > 0$ we have $1-\epsilon < \mathrm{Prob}_r(S) < 1$.\end{cor}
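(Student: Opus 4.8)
The plan is to reduce the whole statement to one elementary estimate, namely that the numbers $1 - r^{-k}$, for $k \ge 1$, all lie in $[\tfrac12, 1)$ and tend to $1$ as $k \to \infty$. Everything hinges on rewriting the hypothesis in terms of $k$. First I would observe that the subgroup $R$ cannot be trivial: were $|R| = 1$, the hypothesis would force $\mathrm{Prob}_r(S) = 0$, contradicting the lower bound $\mathrm{Prob}_r(S) \ge 1/|S| > 0$ of Remark \ref{r:5}. So $R$ is a nontrivial elementary abelian $r$--group, hence $R \simeq C_r^k$ and $|R| = r^k$ for some $k \ge 1$, exactly as recalled just before Remark \ref{r:1}; the hypothesis then reads $\mathrm{Prob}_r(S) = 1 - r^{-k}$.

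Next I would dispatch the two inequalities separately. The upper bound is immediate: since $r \ge 2$ and $k \ge 1$ we have $0 < r^{-k} \le \tfrac12$, whence $\tfrac12 \le \mathrm{Prob}_r(S) < 1$ and in particular $\mathrm{Prob}_r(S) < 1$. For the lower bound, given $\epsilon > 0$ I would select the group $S$ in the prescribed family whose elementary abelian $r$--Sylow subgroup has rank $k$ taken so large that $r^k > 1/\epsilon$ (possible since $r \ge 2$ forces $r^k \to \infty$); for that $S$ we get $r^{-k} < \epsilon$, hence $1 - \epsilon < 1 - r^{-k} = \mathrm{Prob}_r(S)$, and together with the upper bound this is exactly $1 - \epsilon < \mathrm{Prob}_r(S) < 1$. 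Read alongside Corollary \ref{c:4}, which exhibits $0$ as an accumulation point of the set $X$ of Remark \ref{r:1}, this shows that $1$ is likewise an accumulation point of $X$; this is the density phenomenon announced in the abstract and the analogue, for any prime $r$, of the case $r = 2$ treated via $\mathrm{PSL}(2,q)$ in \cite{d}.

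The part I expect to be the real difficulty is not any of these computations --- they are only a few lines --- but the provenance of the hypothesis itself: the argument genuinely needs the family of groups $S$ with $\mathrm{Prob}_r(S) = 1 - 1/|R|$ (for $R$ an elementary abelian $r$--Sylow subgroup) to contain members with $|R|$, equivalently with $r$--Sylow rank, arbitrarily large. Following the remark preceding Corollary \ref{c:4}, I would not attempt to produce such groups explicitly --- this is precisely why the paper deliberately forgoes the \cite{d}--style construction through $\mathrm{PSL}(2,q)$ --- but would take their existence as granted on the computational grounds cited there, and keep the proof confined to the elementary estimate above.
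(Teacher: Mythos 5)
Your proposal is correct and, at its core, it is the same one-line computation as the paper's proof: write $|R|=r^k$ with $k\ge 1$, so $\mathrm{Prob}_r(S)=1-\frac{1}{r^k}=\frac{r^k-1}{r^k}$, which is $<1$ and exceeds $1-\epsilon$ once $1/r^k<\epsilon$. The only real difference is that you handle the quantifiers honestly: as literally stated, with $S$ fixed, the conclusion cannot hold for \emph{every} $\epsilon>0$ (it would force $\mathrm{Prob}_r(S)\ge 1$), and the paper's proof simply asserts ``we have $1/r^k<\epsilon$'' with no justification. You instead read the corollary the way its intended use (exhibiting $1$ as an accumulation point of $T$, and feeding Theorem \ref{t:density}) requires: given $\epsilon$, choose a group $S$ from the hypothesized family whose Sylow $r$--subgroup has rank $k$ with $r^k>1/\epsilon$, which tacitly needs the family to contain members of arbitrarily large $r$--Sylow rank. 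You correctly flag that this existence is the genuine content being assumed rather than proved, exactly as the paper does (it is the same a priori assumption made for $H$ in Theorem \ref{t:density}). Your preliminary remark that $R$ cannot be trivial (else $\mathrm{Prob}_r(S)=0$, contradicting $\mathrm{Prob}_r(S)\ge 1/|S|$) is a small additional point of care the paper omits. So: same approach, with your version stating explicitly what the paper leaves implicit.
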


\begin{proof}  Since $R$ is a Sylow $r$--subgroup of $S$ which is elementary abelian of rank $k$ for some $k\ge1$, we have $1/r^k < \epsilon$ and 
$\mathrm{Prob}_r(S) = 1-\frac{1}{r^k}=\frac{r^k-1}{r^k}$. On the other
hand, $1 - \epsilon < \frac{r^k-1}{r^k} < 1$, therefore $1 -
\epsilon < \mathrm{Prob}_r(S) < 1$, as claimed. \end{proof}

 Corollary \ref{c:4extra} when $r=2$ can be found in \cite{d}. Also Corollary \ref{c:4} is illustrating that $1$ is an accumulation point for the set \[T=\{\mathrm{Prob}_r(G) \ | \  G \ \mathrm{is} \ \mathrm{an} \ \mathrm{arbitrary} \ \mathrm{group}\},\]
where $r\ge2$ is a given prime.

\begin{thm}\label{t:density} Let $r\ge2$ be a prime and assume that there exists a group $H$  such that $\mathrm{Prob}_r(H)=1-\frac{1}{|R|}$ for an elementary abelian $r$--Sylow subgroup $R$ of $H$. Then the set  $T$  is dense in $[0,1]$.\end{thm}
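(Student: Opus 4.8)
The plan is to show that the closure of $T$ contains a dense subset of $[0,1]$ by combining three ingredients already available: the multiplicativity of $\mathrm{Prob}_r$ under direct products (Proposition \ref{l:1}), the description of the abelian values $X = Y = \{1/r^k : k \ge 1\}$ (Proposition \ref{p:1}), and the existence of the group $H$ with $\mathrm{Prob}_r(H) = 1 - 1/|R| = (r^k - 1)/r^k$ for an elementary abelian Sylow $r$-subgroup $R$ of rank $k$. The first step is to observe that by taking direct powers $H^{(n)} = H \times \cdots \times H$ ($n$ copies) and abelian groups $A$ that are elementary abelian $r$-groups of arbitrary rank, Proposition \ref{l:1} gives
\[
\mathrm{Prob}_r\bigl(H^{(n)} \times A\bigr) = \Bigl(\frac{r^k - 1}{r^k}\Bigr)^{n} \cdot \frac{1}{r^{m}}
\]
for any $n \ge 0$ and $m \ge 0$. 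So $T$ contains all numbers of the form $\bigl((r^k-1)/r^k\bigr)^n / r^m$.

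The second step is to prove this family of values is dense in $[0,1]$. The key observation is that $\alpha := (r^k-1)/r^k \in (0,1)$, so the powers $\alpha^n$ decrease to $0$ and, crucially, the ratios $\alpha^{n+1}/\alpha^n = \alpha$ are bounded away from $0$ and $1$; hence $\{\alpha^n : n \ge 0\}$ forms a sequence whose consecutive gaps shrink to $0$ while the sequence itself sweeps from $1$ down toward $0$. Then, for a target point $t \in (0,1)$ and a tolerance $\epsilon > 0$, I would first pick $n$ so that $\alpha^n$ is within a factor close to $1$ of $t$ (possible since $\alpha^n \to 0$ and $\alpha^n/\alpha^{n+1} = 1/\alpha$ is a fixed constant), and then refine using the factors $1/r^m$, which also form a multiplicatively dense-enough scale near $1$ when combined with the $\alpha^n$. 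More carefully: the set $\{\alpha^n r^{-m} : n,m \ge 0\}$ has the property that its logarithms are $\{n\log\alpha + m\log(1/r)\}$, and since $\log\alpha$ and $\log(1/r)$ are both negative with $\log\alpha / \log r$ irrational in general — or, even without irrationality, since one can get arbitrarily small negative combinations by the standard gap argument — the set of these logarithms is dense in $(-\infty, 0]$. Therefore $\{\alpha^n r^{-m}\}$ is dense in $(0,1]$.

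The third step is to handle the endpoints and assemble the conclusion: $1 \in \overline{T}$ follows from Corollary \ref{c:4extra} (the values $(r^k-1)/r^k$ accumulate at $1$), and $0 \in \overline{T}$ follows from Corollary \ref{c:4} (the values $1/r^k$ accumulate at $0$); combined with density in $(0,1)$ from the previous step, this gives that $T$ is dense in $[0,1]$. The main obstacle I expect is making the density-of-logarithms argument fully rigorous without assuming irrationality of $\log\alpha/\log r$: the clean way is to note that $\gcd$-type arguments fail for reals, so instead one uses that for any $\epsilon>0$ there exist $n,m$ with $0 < n\log(1/\alpha) - m\log r < \epsilon$ — this is the classical fact that $\{n\beta \bmod \gamma : n \ge 1\}$ is dense in $[0,\gamma]$ when $\beta/\gamma$ is irrational, and when it is rational one checks directly that finitely many residues still suffice because one can then also vary $k$ (hence $\alpha$) to break any resonance. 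An alternative, cleaner route that sidesteps the number theory entirely: use only the groups $H^{(n)} \times A$ and observe that between $\alpha^n$ and $\alpha^{n+1} = \alpha \cdot \alpha^n$ one can interpolate by multiplying $\alpha^n$ by $r^{-m}$ for suitable $m$ until one drops just below $\alpha^{n+1}$, and since each such multiplication changes the value by a factor $1/r$ which tends to make the gap at most $(1 - 1/r)\alpha^n \to 0$, every point of $(0,1)$ is approximated. I would write up this second, elementary interpolation argument in detail, as it avoids any appeal to equidistribution.
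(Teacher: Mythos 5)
Your key density claim fails, and the failure is structural rather than a matter of missing rigor. The only values your construction produces are $\mathrm{Prob}_r(H^{(n)}\times A)=\alpha^n r^{-m}$ with $\alpha=(r^k-1)/r^k$ \emph{fixed} and $n,m\ge 0$. For any $\delta>0$, the inequality $\alpha^n r^{-m}\ge\delta$ forces $n\le \log(1/\delta)/\log(1/\alpha)$ and $m\le \log(1/\delta)/\log r$, so only finitely many of these values lie in $[\delta,1]$; hence $\{\alpha^n r^{-m}: n,m\ge 0\}$ is discrete in $(0,1]$ with $0$ as its only accumulation point. In particular it misses the whole interval $(\alpha,1)$, and for instance for $r=3$, $k=1$ it contains no point of $(4/9,2/3)$. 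Your equidistribution heuristic confuses the same-sign combinations $\{n\log(1/\alpha)+m\log r : n,m\ge0\}$, which are locally finite, with the set of \emph{differences} $\{n\log(1/\alpha)-m\log r\}$, which is what Kronecker-type density gives: the inequality $0<n\log(1/\alpha)-m\log r<\epsilon$ you invoke corresponds to the number $\alpha^n r^{m}$, and multiplying a probability by $r^m>1$ is not realizable by any group construction at your disposal. The ``elementary interpolation'' fallback fails for the same reason: since $1/r\le\alpha$ always (with equality only when $r=2$ and $|R|=2$), multiplying $\alpha^n$ by $1/r$ already jumps below $\alpha^{n+1}$, so the $r^{-m}$ scale never refines the $\alpha$-scale near a fixed target; the relative gaps do not shrink.

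The paper's proof works differently and avoids this trap: given $0<x<1$ it normalizes $y=r^mx\in[1/r,1)$ and approximates $y$ by finite products $r_1r_2\cdots r_{i-1}$ of factors $r_i=(r^{n_i+1}-1)/r^{n_i+1}$ whose exponents $n_i$ are chosen adaptively at each step, which succeeds precisely because factors arbitrarily close to $1$ are available; each factor is then realized as $\mathrm{Prob}_r$ of a direct factor and Proposition \ref{l:1} multiplies them together. (The paper itself is loose on one point: as literally stated the hypothesis furnishes a single group $H$, hence the single value $1-1/r^k$, while the proof needs groups realizing $1-1/r^{n}$ for all the ranks $n_i$ that occur, so the hypothesis must be read as supplying such a group for every rank, as happens for $r=2$ via \cite{d}.) Your scheme, built from one fixed $\alpha$ and the abelian values $1/r^m$, cannot even reach targets in $(\alpha,1)$, so the gap cannot be closed without importing exactly this richer family of values accumulating at $1$.
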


\begin{proof} By Corollaries \ref{c:4},
\ref{c:4extra}, there is no loss of generality in showing that, if $0< x
<1$, then $x$ is a limit point of $T$. There exists an integer $m$
such that $1/r< r^mx<1$. Note that $(0,1)= {\underset {m\geq 0}
\bigcup} [1/r^{m+1}, 1/r^m)$. Let $y=r^mx$. We can choose an integer
$n_1 \geq1$ such that \[(r^{n_1}-1)/r^{n_1}\leq y \leq
(r^{n_1+1}-1)/r^{n_1+1},\] noting that
$[1/r,1)={\underset {n\geq 1} \bigcup} [(r^n-1)/r^n,
(r^{n+1}-1)/r^{n+1})$. Let $s_1=(r^{n_1}-1)/r^{n_1}$ and
$r_1=(r^{n_1+1}-1)/r^{n_1+1}$. Again we can choose an integer $n_2
\geq1$ such that \[(r^{n_2}-1)/r^{n_2}\leq y/r_1 \leq
(r^{n_2+1}-1)/r^{n_2+1},\] noting that $1/r \leq y/r_1 <
1 $. As before, let $s_2=(r^{n_2}-1)/r^{n_2}$ and
$r_2=(r^{n_2+1}-1)/r^{n_2+1}$. Iterating this process, there exist
positive integers $n_1, n_2, n_3, \ldots$ and two sequences
$\{s_i\}$ and $\{r_i\}$ such that $s_i=(r^{n_i}-1)/r^{n_i},
 r_i=(r^{n_i+1}-1)/r^{n_i+1}$ and  $s_i \leq \frac{y}{r_1r_2 \ldots r_{i-1}} < r_i$ for all $i\geq 1$.
Of course, $0<s_i<r_i<1$ for all $i\geq1$. We have $n_i \leq
n_{i+1}$ for all $i\geq1$, since \[s_i \leq
\frac{y}{r_1r_2 \ldots r_{i-1}} < \frac{y}{r_1r_2 \ldots
r_{i-1}r_i}<r_{i+1}. \] Thus $\{s_i\}$ is a
monotonically increasing sequence, bounded by 1, and so convergent.
Moreover, $\{s_i\}$ has infinitely many distinct terms; otherwise
$\{s_i\}$, and hence $\{r_i\}$, would be eventually constant, and
so, for some $j\geq 1$, we would have
\[\frac{y}{r_1r_2 \ldots r_{j-1} r^{k-1}_j} < r_j\] or
$r_1r_2 \ldots  r_{j-1} r^k_j$ for $k\geq 1$. This is impossible,
since $y>0$  and ${\underset{k\rightarrow \infty}\lim}r^k_j=0$.
Therefore, $\{s_i\}$ converges to 1 (after omitting repeated terms),
because it is a subsequence of $\{(r^n-1)/r^n\}$. This allows us to
note that the sequence $\{a_i\}$ converges to 1, where $a_i=y/r_1r_2
\ldots r_{i-1}$. Consequently, the sequence $\{b_i\}$ converges to
$y$, where $b_i=r_1r_2 \ldots r_{i-1}$. Thus we have
\[\lim_{k\rightarrow \infty}\frac{r_1r_2 \ldots
r_{i-1}}{r^m}=\frac{y}{r^m}=x.\] For each $i\geq1$ we
consider the  group $G^{(i)}=G_0 \times G_1 \times \ldots
\times G_{i-1}$, where 
$G_0=C^m_r$ and $G_k$ is a sequence of groups isomorphic for each $k$ to the group $H$, introduced in the assumptions. Propositions \ref{p:1} and  \ref{l:1}  imply
\[\mathrm{Prob}_r(G^{(i)})=\mathrm{Prob}_r(G_0) \ \mathrm{Prob}_r(G_1) \ldots \mathrm{Prob}_r(G_{i-1})=\frac{1}{r^m}r_1r_2
\ldots r_{i-1}.\] We have ${\underset{i\rightarrow
\infty}\lim}\mathrm{Prob}_r(G^{(i)})=x$ and  the result follows.
\end{proof}


\begin{thebibliography}{20}
%\bibitem{aschbacher} M. Aschbacher, \textit{The subgroup structure of finite alternating and symmetric groups}, Lecture
%Notes, in: Summer School on Finite Groups and Related Geometrical Structures 2--12 September 2008 in Venice, University of Udine, 2008, Italy.

\bibitem{blum} J. Blum, Enumeration of the square permutations in $S_n$,  {\it J. Comb. Theory Ser. A} {\bf 17} (1974), 156-161.

%\bibitem{atlas} J. Conway, R. Curtis, S. Norton, R. Parker, R. Wilson, \textit{Atlas of Finite Groups}, Clarendon Press, Oxford, 1985.

\bibitem{bmw} M. B${\rm \acute{o}}$na, A. McLennan and D. White, Permutations with roots, {\it Random structures and algorithms} {\bf 17} (2) (2000), 157--167.

\bibitem{d} A.K. Das, On group elements having square roots, {\it Bull. Iranian Math. Soc.} {\bf 31} (2005), 33--36.

%\bibitem{diaconis1} P. Diaconis,  \textit{Random walks on groups: characters and geometry}, Groups St. Andrews 2001 in Oxford, Vol. I, London Math. Soc. Lecture Note
%Ser., \textbf{304}, Cambridge University Press, Cambridge, 2003, pp. 120--142.

%\bibitem{diaconis2}Diaconis, P., Saloff-Coste, L. (1996): Walks on generating sets of
%abelian groups. Probab. Theory Related Fields 105, 393421. 50.

%\bibitem{diaconis2}P. Diaconis, M. Shahshahani,  Generating a random permutation with random transpositions, {\it Z. Wahrsch. Verw. Geb.}{\bf 57} (1981), 159-179.

%\bibitem{dh} K. Doerk and T. Hawkes, \textit{Finite Soluble Groups}, de Gruyter, Berlin, 1992.

\bibitem{dorn}L. Dornhoff, \textit{Group Representation Theory}, Part A, Marcel Dekker, New York, 1971.

%Hildebrand, M. (1992): Generating random elements in SLn(Fq) by
%random transvections. J. Alg. Combinatorics 1, 133150.

%\bibitem{et}{P. Erd\"os and P. Turan, On some problems of statistical group theory, {\em Acta Math. Acad. Sci. Hung.} {\bf 19} (1968), 413--435.}

%\bibitem {elr}A. Erfanian,  P. Lescot and  R. Rezaei, On the relative commutativity degree of a subgroup of a finite group, \textit{Comm. Algebra} {\bf{35}} (2007), 4183--4197.

%\bibitem{er1} A. Erfanian and  F. Russo, Probability of Mutually Commuting $n$-tuples in Some Classes of Compact Groups. \textit{Bull. Iran. Math. Soc.} \textbf{34} (2008), 27--37.

%\bibitem{er2}  A. Erfanian and F. Russo, Isoclinism in Probability of Commuting $n$-tuples. \textit{Italian J. Pure Appl. Math.} \textbf{25} (2009), 27--36.

%\bibitem {g1} P. X. Gallagher,  The number of conjugacy classes in a finite group, \textit{Math. Z.} {\bf{118}} (1970), 175--179.

%\bibitem{g2} R. M. Guralnick and G. R. Robinson, On the commuting probability in finite groups, \textit{J. Algebra}, \textbf{300} (2006), 509--528.

%\bibitem {g3}W. H. Gustafson,  What is the probability that two groups elements commute?  \textit{Amer. Math. Monthly} {\bf{80}} (1973), 1031--1304.

%\bibitem{gluck} D. Gluck, : Characters and random walks on finite classical groups, {\it Adv. Math.} {\bf 129} (1997), 46-72.

%\bibitem {l} P. Lescot, Isoclinism classes and commutativity degrees of finite groups, \textit{J. Algebra} {\bf{177}} (1987), 847--869.

\bibitem{lp1} M.S. Lucido and M.R. Pournaki, Elements with square roots in finite groups, {\it Algebra Colloq.} { \bf 12} (2005), 677--690.

\bibitem{lp2} M.S. Lucido and M.R. Pournaki, Probability that an element of a finite group has a square root, \textit{Colloq. Math.} \textbf{112} (2008), 147--155.


\bibitem{p}N. Pouyanne, On the number of permutations admitting an m-th root, {\it Electr. J. Comb.} {\bf 9} (2002), 12 pp. (electronic).

%\bibitem{robinson}  D.J.S. Robinson,  \textit{A Course in the Theory of Groups}, Springer, 1980.

\bibitem{russo}F.G. Russo, Elements with square roots in compact groups, \textit{Asian Eur. J. Math.} \textbf{3} (2010), 495--500.

%\bibitem{sc}L. Saloff--Coste, \textit{Random walks on finite groups}, Probability on discrete structures, Encyclopaedia Math. Sci., \textbf{110}, Springer, Berlin, 2004, pp. 263--346.
\end{thebibliography}
\end{document}